\documentclass[11pt,oneside,reqno]{amsart}
\usepackage{amsfonts}
\usepackage{amsmath}
\usepackage{amssymb}
\usepackage{graphicx}
\usepackage{subcaption}
\usepackage{epstopdf} % with/without is the same
\usepackage[T1]{fontenc}
\usepackage{setspace}
\usepackage{version}
\usepackage{bm}
\usepackage{enumerate}
\usepackage{epsfig}

\usepackage{setspace}
\usepackage{caption}
\DeclareCaptionLabelFormat{cont}
\usepackage
\theoremstyle{plain}
\newtheorem{theorem}{Theorem}[section]

\newtheorem{lemma}[theorem]{Lemma}
\newtheorem{proposition}[theorem]{Proposition}
\newtheorem{definition}[theorem]{Definition}

\newtheorem{remark}[theorem]{Remark}
\newtheorem{consequence}[theorem]{Consequence}
\newtheorem*{notation*}{Notation}
\numberwithin{equation}{section}
\excludeversion{comment1}

\begin{document}
\title{Manifold Data Imputation}
\author{{\bf David Levin}\\ \\School of Mathematical Sciences, Tel-Aviv University, Israel}
%\maketitle
%\footnotetext {David Levin, School of Mathematical Sciences, Tel-Aviv University, Israel}
\begin{abstract}
We consider the problem of reconstructing missing data on a smooth manifold from
incomplete and nonuniform samples. While classical methods for manifold approximation
typically assume quasi-uniform data, their performance deteriorates significantly in the presence of
large gaps or holes. We propose a unified framework for manifold data imputation that
reduces the problem to function reconstruction on locally defined tangent spaces.

The approach combines two complementary strategies. The first is a Fourier-based method
that determines missing values by prescribing a decay rate of the discrete Fourier
coefficients, thereby enforcing high-order smoothness through a global spectral
criterion. The second is a local variational method based on minimizing high-order
central differences, leading to sparse least-squares systems with favorable stability
and conditioning properties. We establish a discrete inverse estimate linking decay of
Fourier coefficients to uniform bounds on high-order divided differences, providing a
theoretical foundation for the spectral approach. For the variational method, we analyze
existence, uniqueness, and scaling behavior, showing that conditioning depends primarily
on the geometry of the missing region.

These functional reconstruction techniques are integrated with a moving least-squares
projection framework to yield a practical algorithm for manifold completion. Numerical
experiments, including reconstruction on surfaces with significant missing regions,
demonstrate accurate and stable recovery without requiring a global parameterization.
The proposed framework provides a flexible and effective approach to manifold data
imputation in challenging settings with incomplete data.
\end{abstract}
\maketitle

\section{Introduction}

Reconstruction of incomplete data on manifolds arises in a wide range of applications,
including geometric modeling, data analysis, and scientific computing. Classical
approaches to scattered data approximation, such as radial basis function methods
\cite{Buhmann2003,Wendland2004}, provide a general framework for approximating
functions from irregular samples. These ideas have been extended to functions defined
on manifolds, where kernels restricted to embedded submanifolds yield approximation
schemes with Sobolev-type error estimates \cite{FuselierWright2012}. Another important
class of methods is based on local approximation. Moving least-squares constructions
have been used to reconstruct manifolds and to approximate functions defined on them
from scattered samples \cite{LipmanLevin2010,SoberLevin2016,SoberAizenbudLevin2017}.
These approaches rely on local coordinate representations, typically obtained from
tangent space approximations, and achieve high-order accuracy under suitable sampling
assumptions.

A complementary direction exploits spectral representations associated with the
geometry of the underlying manifold. Methods based on graph Laplacians or diffusion
operators provide low-frequency representations of functions on manifolds and have
been widely used in manifold learning and data analysis \cite{BelkinNiyogi2003,
CoifmanLafon2006}. Related ideas have also been applied to the approximation of
manifold-valued data and to manifold reconstruction from scattered point clouds
\cite{GrohsSprecherYu2017,FaigenbaumGolovinLevin2020}. Despite these developments,
most existing methods assume a sufficiently dense or quasi-uniform sampling of the
manifold. The reconstruction problem becomes considerably more difficult when the data
are highly nonuniform or when substantial regions of the manifold contain missing
samples.

In this work, we address the problem of \emph{manifold data imputation in the presence
of missing regions (holes)}. The key observation is that, although manifold data lack an intrinsic
grid structure, they can be locally mapped to function data on tangent spaces. This
enables the use of Fourier-based extension and imputation techniques, originally
developed for functions on uniform grids, in a geometric setting.

The functional component of the method is based on prescribing a decay rate for the
discrete Fourier coefficients of the reconstructed data. Such decay conditions reflect
the smoothness of the underlying function and can be translated into bounds on discrete
derivatives. By formulating the extension problem as a weighted least-squares
minimization in the Fourier domain, one obtains reconstructions whose spectral behavior
is controlled in a stable manner. In particular, decay assumptions on the Fourier
coefficients imply uniform bounds on high-order mixed differences, thereby providing a
discrete analogue of classical smoothness results. Related spectral viewpoints can be
found, for example, in \cite{Trefethen2000}.

As a complementary strategy, we consider a local variational method based on minimizing
high-order central differences. This approach enforces smoothness directly in the
physical domain and leads to sparse least-squares systems with a clear geometric
structure. The resulting reconstruction is stable, admits a unique solution under mild
conditions, and exhibits a scaling behavior in which the conditioning is governed
primarily by the geometry of the hole rather than by the mesh size. Moreover, the
effect of the missing region on the discrete derivatives can be quantitatively
controlled, yielding bounded contributions even in the vicinity of the hole. Related
variational ideas appear in PDE-based inpainting methods \cite{Hoeltgen2017}.

To extend these ideas to manifolds, we employ a projection-based reconstruction
framework based on moving least-squares \cite{Meshindependent}, \cite{SoberLevin2016}. By mapping local neighborhoods of the
manifold onto affine tangent spaces, applying the functional imputation methods in
these local coordinates, and lifting the reconstructed data back to the ambient space,
we obtain a practical algorithm for manifold completion. Detection of missing regions
may be carried out using geometric criteria as in \cite{Mindthegap}.

The resulting method is capable of reconstructing missing regions on manifolds from
irregular and incomplete data, without requiring a global parameterization. Numerical
experiments, including reconstruction on surfaces with significant gaps, demonstrate
that the method produces accurate and smooth completions while maintaining stability
in the presence of noise. The combination of spectral and local variational techniques
provides a flexible framework for addressing a broad class of data imputation problems
on manifolds.

\section{\bf Periodic function extension by prescribed decay rate}

Let the domain $D$ be contained
in a box $B \subset \mathbb{R}^d$,
and let $G$ be a uniform grid on $B$.
Assume that the values of a function $f$
are given at the grid points
$X=D \cap G$.
The goal is to extend the function
by determining values
$V_Y=\{v_y\}_{y\in Y}$ on the set
$Y=G\setminus X$
so that the resulting extended function
is $C^M$ and periodic.

The quality of an extension algorithm
can be assessed by examining
the decay rate of the Fourier coefficients
of the extended function.
In \cite{GL10}, it is proposed to derive the extension
by prescribing a desired decay rate
for these coefficients.

Let $B=[0,2\pi]^d$, and let $G$ be a uniform grid of $N^d$ points on $B$.
Consider a $d$-variate $(2\pi)^d$-periodic function $f$ on $\mathbb{R}^d$. We index the points in $G$ as
$$\mathbf{N}=\{\mathbf{n}=(n_1,...,n_d)\ |\  0\le n_j \le N-1,\ \ \ j=1,...,d\}.$$
Consider the discrete Fourier transform defined by the values of $f$ on the grid points,
$$V=\{f(2\mathbf{n}\pi/N)\ |\ \mathbf{n}\in \mathbf{N}\},$$
and let $c_{\mathbf{k}}$ denote the Fourier coefficient with frequency $(k_1,...,k_d)\in \mathbf{N}$:

\begin{equation}\label{c_k}
c_{\mathbf{k}}=\sum_{n\in\mathbf{N}}f(2\mathbf{n}\pi/N)e^{-\frac{2\pi i}{N}\mathbf{k}\cdot\mathbf{n}}.
\end{equation}
We use the non-normalized discrete Fourier transform,
with normalization factor $N^{-d}$ in the inverse transform.
We observe that the coefficients $\{c_{\mathbf{k}}\}$ depend linearly
on the unknown extension values $V_Y$. The algorithm in~\cite{GL10}
determines the extension values $V_Y$
by minimizing the weighted functional
\begin{equation}\label{wfunctional}
C(V_Y)=\sum_{g\in G} |c_{\mathbf{k}}|^2 w_{\mathbf{k}},
\end{equation}
where the weights $w_{\mathbf{k}}$ are selected
to enforce a prescribed decay rate
of the Fourier coefficients.

To motivate the choice of a proper decay rate,
we briefly recall the following result
from~\cite{GL10}.

\begin{lemma}\label{decay}
Let $f$ be a $(2\pi)^d$-periodic function on $\mathbb{R}^d$,
and let
\[
f_n := f\!\left(\frac{2\pi}{N}n\right),
\qquad n \in \mathcal N,
\]
where
\[
\mathcal N = \{0,\dots,N-1\}^d.
\]
Define the discrete Fourier coefficients
\[
c_k = \sum_{n \in \mathcal N}
f_n \, e^{-\,\frac{2\pi i}{N} k \cdot n},
\qquad k \in \mathcal N.
\]
Assume that the mixed derivative
\[
D^{(M,\dots,M)} f
=
\partial_1^M \cdots \partial_d^M f
\]
exists and is bounded on $\mathbb{R}^d$.
Then, for all $k \in \mathcal N$ such that $k_j \neq 0$
for $j=1,\dots,d$,
\begin{equation}
\label{Bound}
|c_k|
\le
N^d(2\pi/N)^{Md}
\|D^{(M,\dots,M)} f\|_\infty
\prod_{j=1}^d
\bigl|e^{-2\pi i k_j/N}-1\bigr|^{-M}.
\end{equation}
\end{lemma}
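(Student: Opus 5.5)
The plan is summation by parts in each coordinate direction, $M$ times per direction, which turns $c_k$ into the DFT of the mixed $M$-th order forward differences of $f$. Those differences are then bounded through their integral (B-spline) representation by $\|D^{(M,\dots,M)}f\|_\infty$.

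\emph{Step 1: one summation by parts.} Write $h=2\pi/N$ and $\omega_j=e^{-2\pi i k_j/N}$. Let $\Delta_j$ be the forward difference in the index $n_j$, with step one, so $\Delta_j f_n = f_{n+e_j}-f_n$. By periodicity, $f_{n+Ne_j}=f_n$, and $n\mapsto n+e_j$ is a bijection of $\mathcal N$ modulo $N$. Hence
\[
\sum_{n\in\mathcal N} f_{n+e_j}\,e^{-\frac{2\pi i}{N}k\cdot n}
=\omega_j^{-1}\sum_{n\in\mathcal N} f_n\, e^{-\frac{2\pi i}{N}k\cdot n},
\]
and the DFT of $\Delta_j f$ equals $(\omega_j^{-1}-1)c_k$. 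Note that $|\omega_j^{-1}-1|=|\omega_j-1|$, and this is nonzero exactly when $k_j\neq 0$. This is the only place the hypothesis $k_j\ne0$ enters.

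\emph{Step 2: iterate.} Applying Step 1 $M$ times in each direction $j=1,\dots,d$ gives
\[
c_k\prod_{j=1}^d(\omega_j^{-1}-1)^M=\sum_{n\in\mathcal N}\bigl(\Delta_1^M\cdots\Delta_d^M f\bigr)_n\, e^{-\frac{2\pi i}{N}k\cdot n}.
\]
Dividing by the product is legitimate since every $k_j\neq0$. The triangle inequality then gives
\[
|c_k|\le N^d\max_n\bigl|(\Delta_1^M\cdots\Delta_d^Mf)_n\bigr|\prod_j|\omega_j-1|^{-M}.
\]

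\emph{Step 3: bound the mixed difference.} This is the step needing care, because only the mixed derivative $D^{(M,\dots,M)}f$ is assumed bounded, not the intermediate ones. In one variable,
\[
\Delta^M g(x)=h^M\int_{\mathbb R} B_M(t)\,g^{(M)}(x+th)\,dt,
\]
where $B_M$ is the cardinal B-spline of order $M$, which is nonnegative with $\int_{\mathbb R} B_M=1$. Taking the tensor product over the $d$ coordinates gives
\[
\Delta_1^M\cdots\Delta_d^M f(x)=h^{Md}\int_{\mathbb R^d}\prod_j B_M(t_j)\, D^{(M,\dots,M)}f(x+th)\,dt.
\]
This identity requires only that the mixed derivative exists. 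It can be justified by applying the one-dimensional identity successively in each variable: the operators act on different variables, so Fubini allows them to be interchanged, and the $M$-fold partial derivatives can be taken inside. Alternatively, it follows from the iterated-integral form $\Delta_j g(x)=\int_0^h\partial_j g(x+se_j)\,ds$ applied $M$ times in each direction. Since the kernel is nonnegative with integral one, the mixed difference is bounded in absolute value by $h^{Md}\|D^{(M,\dots,M)}f\|_\infty$.

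Substituting $h=2\pi/N$ into the bound of Step 2 gives exactly \eqref{Bound}. I expect the only delicate point to be the justification in Step 3 under the minimal hypothesis that just the mixed derivative exists. I would handle it via the iterated fundamental-theorem-of-calculus representation, interpreting $D^{(M,\dots,M)}f$ as a bounded iterated derivative.
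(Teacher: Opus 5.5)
The paper gives no proof of this lemma; it is recalled from \cite{GL10}, so there is no in-paper argument to match. Your proof is correct and is the natural one. Steps~1--2 use the same multiplier identity the paper uses, in the opposite direction, in the proof of its inverse theorem in Section~2: a forward difference in $x_j$ multiplies $c_k$ by $e^{2\pi i k_j/N}-1$. Your organization separates a purely discrete inequality, valid for any periodic grid data,
\[
|c_k|\prod_{j=1}^d\bigl|e^{-2\pi i k_j/N}-1\bigr|^{M}\le N^d\max_{n}\bigl|(\Delta_1^M\cdots\Delta_d^M f)_n\bigr|,
\]
from a separate estimate of differences by derivatives. This makes the lemma and the inverse theorem an exact pair. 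It also shows that the two-order loss in the paper's later remark comes only from summing $|e^{2\pi i k/N}-1|^{-2}$ over $k$ in the inverse direction.

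Step~3 is right in conclusion, but the justification should be tightened. Read literally, the hypothesis only says the iterated derivative $\partial_1^M(\cdots(\partial_d^M f))$ exists, so the order in which you remove directions matters. In particular $\partial_1^M f$ need not exist: take $f(x_1,x_2)=a(x_1)$ with $a$ continuous, $2\pi$-periodic and nowhere differentiable, so that $\partial_1^M\partial_2^M f\equiv 0$. Also, moving $M$-fold partial derivatives under the B-spline integral needs a domination bound that the hypothesis does not supply.

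The cleanest fix is the mean value theorem for $M$-th differences: $\Delta_h^M\phi(t)=h^M\phi^{(M)}(\xi)$ with $\xi\in(t,t+Mh)$. It needs only pointwise existence of $\phi^{(M)}$.
\begin{enumerate}
\item[(a)] Apply it in direction $d$ to $\phi(t)=(\Delta_1^M\cdots\Delta_{d-1}^M f)(x',t)$, whose $M$-th derivative is $(\Delta_1^M\cdots\Delta_{d-1}^M\partial_d^M f)(x',t)$.
\item[(b)] Repeat with $\partial_d^M f$ in direction $d-1$, and continue down to direction $1$.
\end{enumerate}
This gives $\Delta_1^M\cdots\Delta_d^M f(x)=h^{Md}D^{(M,\dots,M)}f(\xi)$ for some $\xi$, with no integrability, Fubini, or interchange of limits. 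Your iterated fundamental-theorem-of-calculus fallback also works if done in this order. By induction each such $\phi^{(M)}$ is bounded, so $\phi^{(M-1)}$ is Lipschitz and the fundamental theorem of calculus applies.
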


In Figure~\ref{Bound2} we plot $\log_{10}$ of the bound in~\eqref{Bound}
for $d=2$, $N=50$, and $M=8$, assuming that
$\|D^{(8,8)} f\|_\infty = 1$.

\begin{figure}
\centering
\includegraphics[width=4.in]{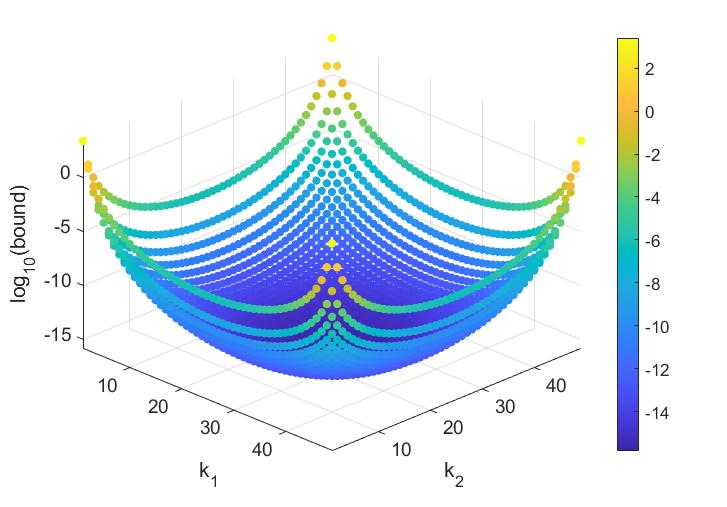}
\hspace{10px}
\caption{Log(Bounds)}
\label{Bound2}
\end{figure}
\medskip

\begin{remark}
In the classical Fourier series setting, an inverse result is well known. 
Let $\{c_k\}_{k\in\mathbb{Z}^d}$ be a sequence of complex coefficients satisfying
\[
\sum_{k\in\mathbb{Z}^d}
\Bigl(\prod_{j=1}^d |k_j|^{\alpha_j}\Bigr)\,|c_k| < \infty
\qquad \text{for every multi-index } \alpha \text{ with } |\alpha|\le M.
\]
Then the Fourier series
\[
f(x)=\sum_{k\in\mathbb{Z}^d} c_k e^{i k\cdot x}
\]
converges absolutely and uniformly together with all its derivatives up to order $M$, and hence defines a $2\pi$-periodic function $f\in C^M(\mathbb{R}^d)$.

This result may be viewed as an inverse theorem for classical Fourier series, asserting that the decay of the Fourier coefficients implies the smoothness of the underlying function. In contrast, for the discrete Fourier transform on a finite $N^d$ grid, no exact inverse statement of this type is available, since only finitely many coefficients are given.
\end{remark}

The following result may be regarded as an inverse theorem for the discrete Fourier transform. It shows that a decay condition on the coefficients, analogous to the classical Fourier setting, yields uniform bounds on mixed divided differences, independent of the number of grid points
$N$. To the best of our knowledge, such an explicit inverse-type estimate
in the discrete setting, with uniform bounds and sharp constants,
is not standard in the literature.

\begin{theorem}
Assume that the coefficients $\{c_k\}_{k\in\mathcal N}$ satisfy
\[
|c_k|
\le
N^d\Bigl(\frac{2\pi}{N}\Bigr)^{Md} C
\prod_{j=1}^d
\bigl|e^{-2\pi i k_j/N}-1\bigr|^{-M},
\qquad k\in\mathcal N,
\]
for some constant $C>0$. Let $h=2\pi/N$. Then the mixed divided
differences of order $(M-2,\dots,M-2)$ satisfy
\[
\|\delta_1^{M-2}\cdots \delta_d^{M-2}f\|_\infty
\le
\Bigl(\frac{\pi^2}{3}\Bigr)^d
\Bigl(1-\frac{1}{N^2}\Bigr)^d C.
\]
\end{theorem}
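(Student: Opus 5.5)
The plan is to express the mixed divided differences of $f$ on the grid directly through the inverse discrete Fourier transform and bound them termwise. I will use the hypothesis on $|c_k|$ and a classical closed-form trigonometric sum. Throughout, I interpret $\delta_j$ as the forward (periodic) difference in the $j$-th coordinate divided by $h$, acting on grid values. So $\delta_j g_n=(g_{n+e_j}-g_n)/h$ with indices taken mod $N$, and hence $\delta_j^{M-2}=h^{-(M-2)}\Delta_j^{M-2}$. I will assume $M\ge 3$, so that $M-2\ge 1$; this is needed below to kill the zero-frequency terms. The choice of centering of the difference only multiplies each exponential by a unimodular factor, so it does not affect the estimate.

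First, I invert the transform, using the normalization stated in the paper:
\[
f_n=N^{-d}\sum_{k\in\mathcal N}c_k\,e^{\frac{2\pi i}{N}k\cdot n}.
\]
Each exponential is an eigenfunction of the shift. Hence $\Delta_j e^{\frac{2\pi i}{N}k\cdot n}=(e^{2\pi i k_j/N}-1)\,e^{\frac{2\pi i}{N}k\cdot n}$, and therefore
\[
\delta_1^{M-2}\cdots\delta_d^{M-2}f_n
=N^{-d}h^{-(M-2)d}\sum_{k\in\mathcal N}c_k\prod_{j=1}^d\bigl(e^{2\pi i k_j/N}-1\bigr)^{M-2}e^{\frac{2\pi i}{N}k\cdot n}.
\]
Since $M-2\ge1$, every term with some $k_j=0$ vanishes. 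This matters because for such $k$ the hypothesis gives no finite bound. So only $k$ with all $k_j\in\{1,\dots,N-1\}$ contribute. I also note that $|e^{2\pi i k_j/N}-1|=|e^{-2\pi i k_j/N}-1|$.

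Next, I insert the assumed bound on $|c_k|$ and apply the triangle inequality. The factor $N^d$ cancels $N^{-d}$. The power $h^{Md}$ combines with $h^{-(M-2)d}$ to give $h^{2d}$. The exponent $-M$ combines with $M-2$ to leave $-2$. The result is
\[
\|\delta_1^{M-2}\cdots\delta_d^{M-2}f\|_\infty\le C\,h^{2d}\prod_{j=1}^d\sum_{k_j=1}^{N-1}\bigl|e^{2\pi i k_j/N}-1\bigr|^{-2},
\]
where the multiple sum factorizes into a product of one-dimensional sums.

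Finally, I evaluate the one-dimensional sum. Using $|e^{i\theta}-1|^2=4\sin^2(\theta/2)$, it equals $\frac14\sum_{k=1}^{N-1}\csc^2(\pi k/N)$. The classical identity $\sum_{k=1}^{N-1}\csc^2(\pi k/N)=(N^2-1)/3$ then applies. It follows, for instance, from partial fractions of $\csc^2$, or from the sum of $1/(1-\omega)^2$ over the nontrivial $N$-th roots of unity $\omega$. Each factor therefore becomes
\[
h^2\cdot\frac{N^2-1}{12}=\frac{4\pi^2}{N^2}\cdot\frac{N^2-1}{12}=\frac{\pi^2}{3}\Bigl(1-\frac1{N^2}\Bigr),
\]
which gives the stated bound. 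The only step that is not routine is this cosecant-sum identity. If I did not want to quote it, I would prove it by differentiating $\log\sin(Nx)=\sum_k\log\sin(x+\pi k/N)+\text{const}$ twice and letting $x\to0$. The rest is bookkeeping of the powers of $h$ and $N$.
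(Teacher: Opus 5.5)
Your proposal is correct and follows essentially the same route as the paper. Both arguments invert the DFT and use that forward differences act diagonally with factor $(e^{2\pi i k_j/N}-1)$. Both then insert the coefficient bound, factorize the sum, and evaluate $\sum_{k=1}^{N-1}|e^{2\pi i k/N}-1|^{-2}=(N^2-1)/12$. You also note explicitly that the terms with some $k_j=0$ vanish only because $M-2\ge 1$, so one needs $M\ge 3$, since the hypothesis gives no bound on those coefficients; the paper's proof passes over this silently when it restricts the sum to $k_j\ge 1$.
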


\begin{proof}
We write
\[
f(x)=\frac{1}{N^d}
\sum_{k\in\mathcal N} c_k e^{ik\cdot x}.
\]
Applying the forward differences gives
\[
\Delta_1^{M-2}\cdots\Delta_d^{M-2}f(x)
=
\frac{1}{N^d}
\sum_{k\in\mathcal N}
\prod_{j=1}^d
\bigl(e^{2\pi i k_j/N}-1\bigr)^{M-2}
c_k\,e^{ik\cdot x}.
\]
Hence
\[
\|\Delta_1^{M-2}\cdots\Delta_d^{M-2}f\|_\infty
\le
\frac{1}{N^d}
\sum_{k\in\mathcal N}
\prod_{j=1}^d
\bigl|e^{2\pi i k_j/N}-1\bigr|^{M-2}
|c_k|.
\]
Substituting the bound on $c_k$, and using
\[
|e^{-2\pi i k_j/N}-1|
=
|e^{2\pi i k_j/N}-1|,
\]
we obtain
\[
\|\Delta_1^{M-2}\cdots\Delta_d^{M-2}f\|_\infty
\le
\Bigl(\frac{2\pi}{N}\Bigr)^{Md}
\sum_{k_1,\dots,k_d=1}^{N-1}
\prod_{j=1}^d
\bigl|e^{2\pi i k_j/N}-1\bigr|^{-2} C.
\]
The sum factorizes, and therefore
\[
\|\Delta_1^{M-2}\cdots\Delta_d^{M-2}f\|_\infty
\le
\Bigl(\frac{2\pi}{N}\Bigr)^{Md}
\left(
\sum_{k=1}^{N-1}
|e^{2\pi i k/N}-1|^{-2}
\right)^d C.
\]
Dividing by $h^{d(M-2)}=(2\pi/N)^{d(M-2)}$ yields
\[
\|\delta_1^{M-2}\cdots \delta_d^{M-2}f\|_\infty
\le
\Bigl(\frac{2\pi}{N}\Bigr)^{2d}
\left(
\sum_{k=1}^{N-1}
|e^{2\pi i k/N}-1|^{-2}
\right)^d C.
\]
Since,  (see, e.g., Gradshteyn and Ryzhik \cite{GR}),
\[
\sum_{k=1}^{N-1}|e^{2\pi i k/N}-1|^{-2}
=
\frac{N^2-1}{12},
\]
we conclude that
\[
\|\delta_1^{M-2}\cdots \delta_d^{M-2}f\|_\infty
\le
\Bigl(\frac{2\pi}{N}\Bigr)^{2d}
\Bigl(\frac{N^2-1}{12}\Bigr)^d C
=
\Bigl(\frac{\pi^2}{3}\Bigr)^d
\Bigl(1-\frac{1}{N^2}\Bigr)^d C.
\]
\end{proof}

\begin{remark}
We emphasize that divided differences  
of orders $M$ and $M-1$  
may be unbounded.  
In the classical Fourier series setting,  
the inverse result leads to  
a loss of one order of smoothness.  
In contrast, for the discrete Fourier transform,  
the loss amounts to two orders.
\end{remark}
\subsection{\bf Properties of the extension by prescribed decay rate}\label{sect1}\hfill

\medskip
Let the domain $D$ be contained in the box $B=[0,2\pi]^d$, and let $X$ be the set of grid points in $D$:
$$X=G\cap D=\{x_\mathbf{n}=2\mathbf{n}\pi/N\in D, |\ \mathbf{n}\in \mathbf{N}\}.$$ Given values of a function $f$ on $X$,
$V_X=\{f(x_\mathbf{n})\ |\ x_\mathbf{n}\in X\}$, we would like to define values $V_Y$ on $Y=G\setminus X$. The extended values $V_Y$, together with the given values
$V_X$, should constitute a smooth periodic set of data values on the grid $G$. Let us denote this combined set of values $V_G$. The way to test
the smoothness and periodicity properties of $V_G$ is by computing
its discrete Fourier transform.
Based on Lemma \ref{decay}, we consider the decay rate of the coefficients of the discrete Fourier transform to be the ultimate measure of the
quality of the extension. The following lemma underpins the extension algorithm presented in \cite{GL10} and also the imputation algorithm suggested in this paper.

\begin{lemma}\label{main}
Assume there exists an extension $V_Y$ such that the coefficients of the discrete Fourier transform of $V_X\cup V_Y$ satisfy
\begin{equation}\label{leek}
|c_{\mathbf{k}}|\le e_{\mathbf{k}},\ \ \mathbf{k}\in\mathbf{N}.
\end{equation}
Consider the extension $V^*_Y$ defined by minimizing
\begin{equation}\label{cost1}
C(V_Y)=\sum_{\mathbf{k}\in\mathbf{N}}|c_{\mathbf{k}}|^2w_{\mathbf{k}},
\end{equation}
over all possible extensions $V_Y$, where
\begin{equation}\label{wk1}
w_{\mathbf{k}}=\frac{1}{e^2_{\mathbf{k}}},\ \ \mathbf{k}\in\mathbf{N},
\end{equation}
then the Fourier coefficients $\{c^*_{\mathbf{k}}\}$ of $V_X\cup V^*_Y$ satisfy
\begin{equation}\label{leek2}
|c^*_{\mathbf{k}}|\le N^{\frac{d}{2}}e_{\mathbf{k}},\ \ \mathbf{k}\in\mathbf{N}.
\end{equation}

\end{lemma}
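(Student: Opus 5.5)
The plan is a minimality comparison followed by a crude termwise bound. By hypothesis there is an admissible extension $\tilde V_Y$ whose coefficients $\tilde c_{\mathbf{k}}$ satisfy \eqref{leek}. Since $V^*_Y$ minimizes \eqref{cost1} over all extensions, $C(V^*_Y)\le C(\tilde V_Y)$. With the weights \eqref{wk1},
\[
C(\tilde V_Y)=\sum_{\mathbf{k}\in\mathbf{N}}\frac{|\tilde c_{\mathbf{k}}|^2}{e_{\mathbf{k}}^2}\le \sum_{\mathbf{k}\in\mathbf{N}}1=N^d .
\]
Consequently
\[
\sum_{\mathbf{k}\in\mathbf{N}}\frac{|c^*_{\mathbf{k}}|^2}{e_{\mathbf{k}}^2}\le N^d .
\]

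Each term of this sum is nonnegative, so every single term is bounded by the whole sum. For each fixed $\mathbf{k}$ this gives $|c^*_{\mathbf{k}}|^2/e_{\mathbf{k}}^2\le N^d$. Taking square roots yields $|c^*_{\mathbf{k}}|\le N^{d/2}e_{\mathbf{k}}$, which is \eqref{leek2}.

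Two points need care.

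\emph{Existence of the minimizer.} The coefficients $c_{\mathbf{k}}$ depend affinely on $V_Y$, so $C$ is a nonnegative convex quadratic in $V_Y$. If all $e_{\mathbf{k}}>0$, then $C$ is a weighted $\ell^2$ norm of the DFT of the full data vector. The DFT is invertible, so $C$ is coercive in $V_Y$, and a minimizer exists; it is in fact unique. If some $e_{\mathbf{k}}=0$, the corresponding weight is infinite, and I would interpret this as the constraint $c_{\mathbf{k}}=0$ imposed on the minimization. The existing extension $\tilde V_Y$ satisfies that constraint, so the feasible set is nonempty, and the same comparison argument applies with those indices removed from the sum; for them $c^*_{\mathbf{k}}=0$ trivially.

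\emph{Expected obstacle.} I expect the only real difficulty to be this bookkeeping around existence and degenerate weights. The inequality itself is immediate, and the factor $N^{d/2}$ is exactly the price of bounding one term by the sum of $N^d$ terms.
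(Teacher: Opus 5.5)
Your proof is correct and follows the paper's argument: the optimal cost is at most the cost $\le N^d$ of the assumed extension, and one nonnegative term is bounded by the whole sum. The paper phrases the last step as a contradiction rather than directly, and it does not address existence of the minimizer or the case $e_{\mathbf{k}}=0$, which you handle as extra care.
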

\begin{proof}
By assumption, there exists an extension $V_Y$ such that
$$C(V_Y)\le \sum_{\mathbf{k}\in\mathbf{N}}e_{\mathbf{k}}^2w_{\mathbf{k}}\le N^d.$$
If for some $\mathbf{k}\in\mathbf{N}$
$$|c^*_{\mathbf{k}}|> N^{\frac{d}{2}}e_{\mathbf{k}},$$
then
$$C(V^*_Y)\ge |c^*_{\mathbf{k}}|^2w_{\mathbf{k}}>N^d,$$
in contradiction with the optimality of $V^*_Y$.
\end{proof}

\begin{consequence}\label{consequence2}{\bf Choosing the weights $\{w_{\mathbf{k}}\}$ in (\ref{cost1}).}

\medskip
Combining the observations in Lemma \ref{decay} and in Lemma \ref{main}, if we look for a $C^M$ periodic extension, we should choose the
weights $\{w_{\mathbf{k}}\}$ in (\ref{cost1}) as:
\begin{equation}\label{wk2}
w_{\mathbf{k}}=\frac{1}{r^2_{\mathbf{k}}},\ \ \mathbf{k}\in\mathbf{N}.
\end{equation}
where
\begin{equation}\label{rk2}
r_{\mathbf{k}}=N^d(2\pi/N)^{Md}
\prod_{j=1}^d
\bigl|e^{-2\pi i k_j/N}-1\bigr|^{-M} ,\ \ \mathbf{k}=(k_1,...,k_d)\in\mathbf{N},
\end{equation}
\end{consequence}

\section{\bf From extension to imputation}

In~\cite{GL10}, the domain $D$ is assumed to be simply connected,
i.e., without holes, and the challenge is to extend the data given on $D$
to a larger domain $B$. The extension smoothly continues the data from $D$ into $B\setminus D$
and prescribes boundary values on $\partial B$ to enforce periodicity. For the imputation task, we allow the domain $D$ to contain holes,
or, more generally, to exhibit missing data.
Let $B=[0,2\pi]^d$  
and let $D \subset B$ be a domain.  
Let $H \subset D$ be an open subset  
on which no data are available,  
with $\overline{H} \subset D$.  
We refer to $H$ as a hole in $D$. 
Let $X$ denote the set of grid points in $D$,
\[
X = G \cap D
= \{x_{\mathbf n} = 2\pi \mathbf n / N \in D
\;|\; \mathbf n \in \mathbb{Z}^d\}.
\]
Given the values of a function $f$ on $X$,
\[
V_X = \{ f(x_{\mathbf n}) \;|\; x_{\mathbf n} \in X \},
\]
we seek to define values
\[
V_Y \quad \text{on} \quad Y = G \setminus X.
\]
The unknown values $V_Y$ include both values
outside $D$ and values within the hole $H \subset D$.
Together with the given values $V_X$,
they form a complete set of values on $B$
over the grid $G$.
We denote this combined set by $V_G$.  
The smoothness and periodicity of $V_G$  
are reflected in the decay  
of its discrete Fourier coefficients.  
A rapid decay  
of the high-frequency coefficients indicates the smoothness 
of the hole imputation.

\subsection{Fourier-based imputation with hyperbolic corner weighting}\hfill

\medskip
As reported in \cite{GL10},  
applying the periodic extension algorithm  
with weights chosen as in  
\eqref{wk2} and \eqref{rk2}  
leads to a highly ill-conditioned  
system of equations. This is highly undesirable,  
particularly in the presence  
of noisy data.

A better-conditioned  
and effective approach,  
proposed in \cite{GL10},  
uses square corner weighting.  
Here we further improve it  
by applying another type of corner weighting derived through the bound $\eqref{rk2}$,
as described below for the two-dimensional case.

Instead of the weights defined by \eqref{wk2},\eqref{rk2}, we use a simple frequency-domain weighting.
Let $W=\{w_{k_1,k_2}\}_{k_1=0,\ldots,N-1,\; k_2=0,\ldots,N-1}$ be the array
of weights in the Fourier domain.
We choose a bound $C$ and set
\[
w_{k_1,k_2}=1
\]
if $|r_{k_1,k_2}|< C$, and $W\equiv 0$ elsewhere.
This choice yields a sharp truncation pattern in the discrete frequency
grid. The set of indices removed by the mask 
has a structure reminiscent of a hyperbolic cross,
reflecting the anisotropic decay of the bound.  An example of this hyperbolic-type selection  
is shown in Figure~\ref{Hyp0001},  
for $N=50$ and $C=0.001$.  
The weights are set to zero  
at the selected points  
and to one elsewhere.

\begin{figure}
\centering
\includegraphics[width=3.5in]{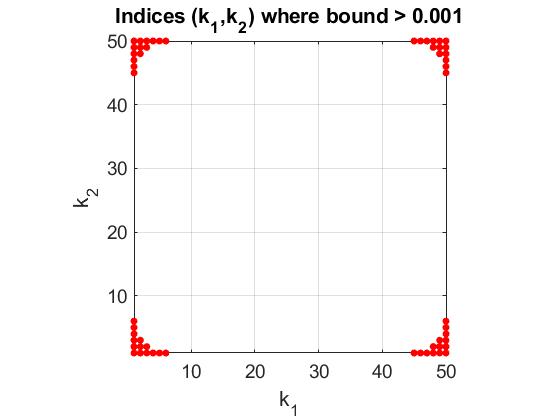}
\hspace{10px}
\caption{Hyperbolic weights}
\label{Hyp0001}
\end{figure}
\medskip

Since the minimization functional has the form
\[
\sum_{\mathbf{k}} w_{\mathbf{k}}\,|c_{\mathbf{k}}|^2,\ \ \ \mathbf{k}=\{k_1,k_2\}
\]
only those Fourier coefficients for which $w_{\mathbf{k}}>0$ are penalized.
The modes located in the triangular corner regions, where
$w_{\mathbf{k}}=0$, are therefore left unpenalized, while all remaining modes
are penalized uniformly.
In the standard unshifted discrete Fourier indexing, these corner regions
correspond to the low-frequency part of the spectrum (including the
constant mode and nearby modes).
Consequently, this choice leaves the low-frequency components essentially
unconstrained and suppresses the complementary part of the spectrum,
which corresponds to higher-frequency content.

Using the expression \eqref{c_k}  
for the Fourier coefficients,  
the minimization problem  
leads to a linear system  
\[
A V_Y = b,
\]
where $V_Y$ denotes  
the unknown values on $Y$.  
The completed data  
are then inserted  
into the full  
$N\times N$ grid.
To assess the numerical behavior 
of the method,  
we report the condition number  
of the matrix $A$ 
and display the reconstructed surface  
over the hole and over the entire grid.  
In addition, we plot  
the absolute values  
of the discrete Fourier coefficients  
of the completed data 
to visualize  
the spectral effect  
of the chosen weighting. 

We consider a Fourier-based imputation problem $B=[0,2\pi]^2$,  
discretized on a uniform $50\times 50$ grid $G$. 
We consider a domain $D$ to be an annulus centered at $(\pi,\pi)$, with an outer radius $\pi/2$ and a hole of radius $0.8$. 
The data are prescribed on $X \subset D$,  
with missing values on $Y = G \setminus X$,  
including both the hole and the exterior of $D$.

The underlying test function is
\[
f(x,y)=\frac{1}{2.5+\sin(x+1.2)+\cos(y)},
\]
and noisy data are obtained  
by adding an independent uniform noise 
of amplitude $\varepsilon=0.1$  
to the data on $X$.

The reconstruction is formulated  
as the minimization of a weighted quadratic functional  
in the Fourier domain,  
leading to a linear system for the unknown values on $Y$. As described above, the choice of weights is derived from the theoretical bound  
and modified by truncation in selected frequency regions. It 
plays a central role in controlling the conditioning  
of the resulting system. The weights follow  
the pattern shown in Figure~\ref{Hyp0001},  
vanishing at the selected red points  
and equal to one elsewhere. The set $X$ of known data points  
contains $360$ points,  
while $2140$ values  
remain to be determined on the set $Y$. For the corresponding $2160\times 2160$ matrix $A$,  
we obtain  
\[
\operatorname{cond}(A)=1.0\times 10^{8},
\]
indicating moderate ill-conditioning.

The performance of the method  
is evaluated by the reconstructed surface  
shown in Figure~\ref{Surface}(A),  
and by the decay  
of the discrete Fourier coefficients  
shown in Figure~\ref{Surface}(B). We observe in panel (B)  
that the significant Fourier coefficients  
correspond to low-frequency components.  
The high-frequency components  
are not identically zero,  
but their magnitude  
is below $4\times 10^{-4}$.

The reconstruction of the missing data 
in the hole is presented  
in Figure~\ref{Reconstruction2}.  
Panel (A) shows the reconstruction  
together with the noisy data  
on the annulus,  
whereas panel (B) shows it  
together with the exact data  
on the annulus.
The reconstruction error  
on the hole is $0.22$,  
indicating stable recovery  
despite the noise level $0.1$.

\begin{figure}[ht]
\centering

\begin{subfigure}{0.48\textwidth}
    \centering
\includegraphics[width=1.\linewidth]{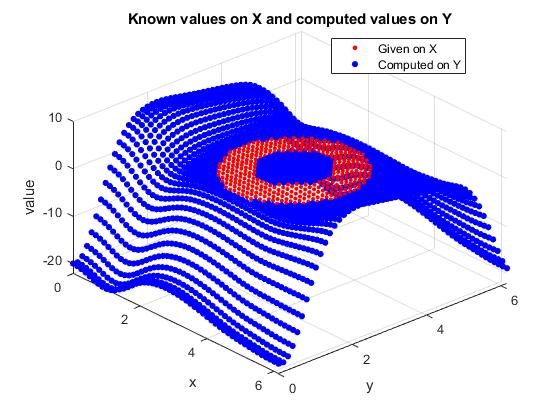}
    \caption{Known and reconstructed data}

\end{subfigure}
%\hfill
\begin{subfigure}{0.48\textwidth}
    \centering
    \includegraphics[width=1.\linewidth]{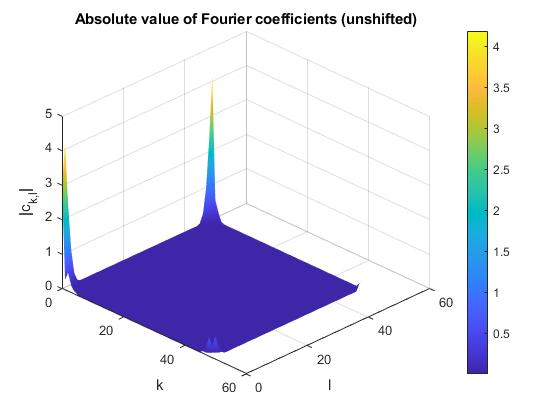}
    \caption{Fourier coefficients}
\end{subfigure}

\caption{}
\label{Surface}
\end{figure}

\begin{figure}[ht]
\centering

\begin{subfigure}{0.48\textwidth}
    \centering
\includegraphics[width=1.\linewidth]{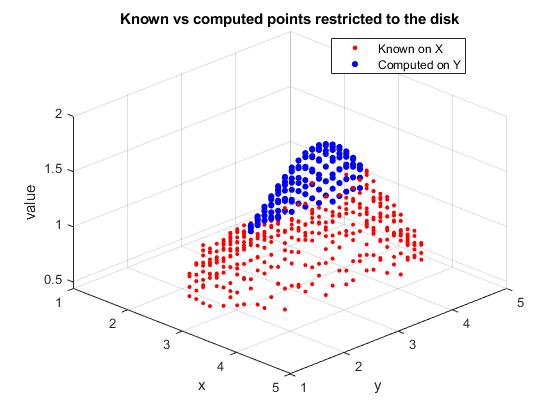}
\caption{The reconstructed data in the hole, displayed together with the noisy data on the annulus}

\end{subfigure}
%\hfill
\begin{subfigure}{0.48\textwidth}
    \centering
    \includegraphics[width=1.\linewidth]{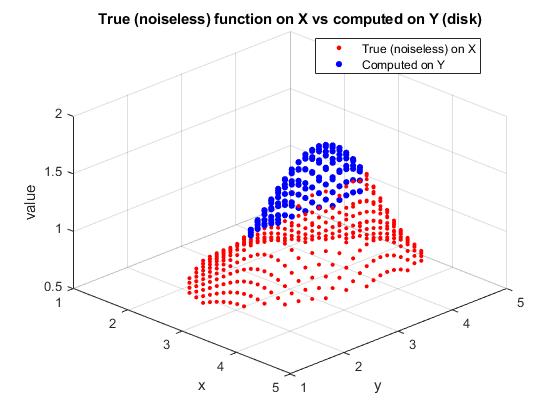}
    \caption{The reconstructed data in the hole, displayed together with the exact data on the annulus}
\end{subfigure}

\caption{}
\label{Reconstruction2}
\end{figure}

%%%%%%%%%%%%%%%%

\section{Hole filling via minimization of high-order differences in $\mathbb{R}^d$}
\label{HoleFA}

Consider the problem of recovering missing data
on a subset of a uniform grid in $\mathbb{R}^d$.
Let $G\subset\mathbb{R}^d$ be a Cartesian grid
with mesh size $h$,
and let $\Omega\subset G$ denote the set of grid points
where data are prescribed.

Assume that $\Omega$ contains a hole,
i.e., a subset $H\subset G$ where the values are unknown.
We denote by $X=\Omega\setminus H$ the set of known points,
and by $Y=H$ the set of unknown points to be recovered. Concerning the size of the hole,
we distinguish between two cases:
(I) The hole is contained in a ball
of diameter $q h$, where $q$ is fixed,
and (II) the hole is contained in a ball
of diameter $D$,
independent of $h$.

\medskip

\noindent
Let $f:X\to\mathbb{R}$ be the given data.
We seek an extension $u:G\to\mathbb{R}$ such that
\[
u|_X = f,
\]
and such that $u$ is as smooth as possible
in a discrete sense.

\medskip

\noindent
To this end, we define the cost functional
\begin{equation}
    \label{COST}
\mathrm{J}(u)
=
\sum_{i\in G_k}
\sum_{j=1}^d
\bigl(\Delta_j^{2k} u(i)\bigr)^2,
\end{equation}
where $\Delta_j^{2k}$ denotes the $2k$-th order
central difference operator in the $j$-th coordinate direction,
and $G_k\subset G$ is the subset of grid points
whose distance from the boundary of the computational domain
is at least $k$ grid steps.

\medskip

\noindent
The reconstruction problem is therefore formulated as:
find $u$ such that
\begin{equation}
\label{uXf}
u|_X = f,
\end{equation}
and
\begin{equation}
\label{argminCOST}
u = \arg\min \mathrm{J}(u).
\end{equation}

\medskip

\noindent
This leads to an overdetermined linear least-squares system
for the unknown values $\{u(y)\}_{y\in Y}$.
The resulting system is sparse and structured,
since each equation involves only a local stencil
of size $2k+1$ in each coordinate direction.

\medskip

\noindent
In practice, the minimization is performed
on a local rectangular domain $B\subset G$
that contains the hole $H$ and is enlarged
by $k+1$ layers of grid points in each coordinate direction.
This ensures that all difference stencils are fully supported.

\medskip

\noindent
The method can be interpreted as constructing
a discrete extension,
in which high-order differences are minimized,
leading to a smooth continuation of the data
into the missing region.

\begin{proposition}[Bounds on high-order differences
involving unknown data]
Let $u_h$ be the minimizer of
\[
\mathrm{J}(u)
=
\sum_{i\in G_k}\sum_{j=1}^d
\bigl(\Delta_j^{2k}u(i)\bigr)^2
\]
subject to $u_h|_X = f|_X$,
where $X$ is the set of known grid points,
and let $H$ denote the hole.

Assume that $f\in C^M(B)$,
with $M\ge 2k$.
Let $S_h$ be the set of all $(i,j)$
such that the stencil of $\Delta_j^{2k}u(i)$
contains at least one unknown point from $H$.

Then the following hold.

\begin{enumerate}
\item[(I)]
If the hole is contained in a ball
of diameter $qh$, then
\[
|S_h|\le C(q,d,k),
\]
and
\[
\bigl|\Delta_j^{2k}u_h(i)\bigr|
\le C\,h^{2k},
\qquad (i,j)\in S_h.
\]

\item[(II)]
If the hole is contained in a ball
of diameter $D$, independent of $h$,
then
\[
|S_h|\le C(D,d,k)\,h^{-d},
\]
and
\[
\bigl|\Delta_j^{2k}u_h(i)\bigr|
\le C\,h^{2k-d/2},
\qquad (i,j)\in S_h.
\]
\end{enumerate}

The constant $C$ depends on
$D$, $d$, $k$,
and $\|f\|_{C^{2k}(B)}$,
but is independent of $h$.
\end{proposition}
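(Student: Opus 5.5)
The plan is to compare the minimizer $u_h$ with a competitor: the grid restriction of $f$ itself, extended into the hole by $f$'s own values. Since $f\in C^M(B)$ with $M\ge 2k$, the function $f$ is defined and smooth on all of $B$, including $H$. So the grid function $\tilde u = f|_{G\cap B}$ is admissible: it satisfies $\tilde u|_X=f|_X$. By minimality,
\[
\mathrm{J}(u_h)\le \mathrm{J}(\tilde u)=\sum_{i\in G_k}\sum_{j=1}^d\bigl(\Delta_j^{2k}f(i)\bigr)^2 .
\]
We will not bound this total directly. Instead we split $\mathrm{J}=\mathrm{J}_{S}+\mathrm{J}_{S^c}$, where $\mathrm{J}_{S}$ collects the pairs $(i,j)\in S_h$ and $\mathrm{J}_{S^c}$ the remaining pairs. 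For $(i,j)\notin S_h$ the stencil contains only known points, so $\Delta_j^{2k}u_h(i)=\Delta_j^{2k}f(i)$. These terms therefore cancel on both sides, leaving
\[
\sum_{(i,j)\in S_h}\bigl(\Delta_j^{2k}u_h(i)\bigr)^2\le \sum_{(i,j)\in S_h}\bigl(\Delta_j^{2k}f(i)\bigr)^2 .
\]
This localization is the key step. It makes the estimate depend only on $|S_h|$ and not on the size of the computational box.

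Next, the standard Taylor/Peano-kernel estimate for central differences gives $|\Delta_j^{2k}f(i)|\le h^{2k}\|\partial_j^{2k}f\|_\infty\le C h^{2k}\|f\|_{C^{2k}(B)}$. Each entry of the left-hand sum is bounded by the whole sum, so
\[
|\Delta_j^{2k}u_h(i)|\le |S_h|^{1/2}\,C h^{2k}\qquad\text{for every } (i,j)\in S_h .
\]

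It remains to count $|S_h|$. A pair $(i,j)$ belongs to $S_h$ only if $i$ lies within $k$ grid steps, along direction $j$, of some grid point of $H$. Hence $|S_h|\le d(2k+1)\,|H\cap G|$. In case (I), $H$ lies in a ball of diameter $qh$, so $|H\cap G|\le (q+1)^d$ and $|S_h|\le C(q,d,k)$; the bound $|\Delta_j^{2k}u_h(i)|\le C h^{2k}$ follows. In case (II), $H$ lies in a ball of diameter $D$, so $|H\cap G|\le (D/h+1)^d$ and $|S_h|\le C(D,d,k)h^{-d}$. Then $|S_h|^{1/2}\le C h^{-d/2}$, which gives $C h^{2k-d/2}$.

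I expect no serious obstacle. Two points need care. First, the existence of the minimizer should be justified: the functional is a convex quadratic in the unknown values and is bounded below, so a minimizer exists, and any minimizer satisfies the inequality above. Second, the cancellation of the terms outside $S_h$ depends on the fact that the constraint fixes all known values exactly. That is precisely how $S_h$ is defined, so the argument is consistent.
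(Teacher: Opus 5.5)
Your proposal is correct and follows essentially the same route as the paper: compare $u_h$ with the admissible competitor given by $f$ itself, restrict the inequality to the terms indexed by $S_h$, bound each $\Delta_j^{2k}f(i)$ by $Ch^{2k}$, and count $|S_h|$ in the two regimes. Two of your steps are stated explicitly where the paper leaves them implicit: you justify why the terms outside $S_h$ coincide for $u_h$ and $f$ (the paper only writes ``in particular''), and you give the count $|S_h|\le d(2k+1)\,|H\cap G|$.
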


\begin{proof}
Since $u_h$ minimizes $\mathrm{J}$,
and the exact data $f$ define an admissible extension,
we have
\[
\sum_{i,j}
\bigl(\Delta_j^{2k}u_h(i)\bigr)^2
\le
\sum_{i,j}
\bigl(\Delta_j^{2k}f(i)\bigr)^2.
\]
In particular,
\[
\sum_{(i,j)\in S_h}
\bigl(\Delta_j^{2k}u_h(i)\bigr)^2
\le
\sum_{(i,j)\in S_h}
\bigl(\Delta_j^{2k}f(i)\bigr)^2.
\]

Since $f\in C^{2k}(B)$,
\[
\bigl|\Delta_j^{2k}f(i)\bigr|
\le C\,h^{2k},
\]
and hence
\[
\bigl(\Delta_j^{2k}f(i)\bigr)^2
\le C\,h^{4k}.
\]

In case (I),
the number of affected stencils is bounded,
so
\[
\sum_{(i,j)\in S_h}
\bigl(\Delta_j^{2k}f(i)\bigr)^2
\le C\,h^{4k}.
\]
In case (II),
the number of affected stencils is $O(h^{-d})$,
so
\[
\sum_{(i,j)\in S_h}
\bigl(\Delta_j^{2k}f(i)\bigr)^2
\le C\,h^{4k-d}.
\]

Since each term is bounded by the total sum,
the stated pointwise bounds follow.
\end{proof}

\begin{theorem}[Existence, uniqueness, and scaling of the local
high-order difference reconstruction]
Let $B\subset h\mathbb{Z}^d$ be a rectangular grid patch,
let $X\subset B$ be the set of known grid points,
and let $Y\subset B$ be the set of unknown points.
Assume that the hole $Y$ is strictly contained in $B$,
with at least $k$ grid layers between $Y$
and the boundary of $B$,
so that every centered stencil of width $2k+1$
used in the definition of the functional is fully supported.

Define the quadratic functional
\[
J(u)
=
\sum_{\nu\in B_k}\sum_{j=1}^d
\bigl(\Delta_j^{2k}u(\nu)\bigr)^2,
\]
over all grid functions $u$ on $B$
satisfying the interpolation constraints
\[
u|_X=f.
\]
Let $v\in\mathbb{R}^{|Y|}$ denote the vector of unknown values,
and let
\[
J(v)=\|Av-b\|_2^2
\]
be the corresponding least-squares formulation.

Assume that the only grid function $w$ supported on $Y$
for which
\[
\Delta_j^{2k} w(\nu)=0,
\qquad \nu\in B_k,\quad j=1,\dots,d,
\]
is the zero function.
Then the following hold:

\begin{enumerate}
\item[(i)]
The minimization problem admits a unique solution.

\item[(ii)]
The normal matrix $A^TA$ is symmetric and positive definite,
and the minimizer is characterized by the normal equations
\[
A^TA\,v=A^Tb.
\]
\end{enumerate}
\end{theorem}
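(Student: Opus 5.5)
The plan is to make the affine dependence of the differences on the unknowns explicit, and then reduce both claims to injectivity of $A$, which is exactly the stated hypothesis.

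\textbf{Affine splitting.} Every grid function $u$ on $B$ with $u|_X=f$ can be written uniquely as $u=\tilde f+w_v$. Here $\tilde f$ is the extension of $f$ by zero on $Y$, and $w_v$ is the grid function supported on $Y$ with values $v\in\mathbb{R}^{|Y|}$. Since each $\Delta_j^{2k}$ is linear, we get
\[
\Delta_j^{2k}u(\nu)=\Delta_j^{2k}w_v(\nu)+\Delta_j^{2k}\tilde f(\nu).
\]
Order the pairs $(\nu,j)\in B_k\times\{1,\dots,d\}$ as rows. Define $A$ by $(Av)_{(\nu,j)}=\Delta_j^{2k}w_v(\nu)$ and set $b_{(\nu,j)}=-\Delta_j^{2k}\tilde f(\nu)$. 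Then $J(v)=\|Av-b\|_2^2$, which is the stated least-squares form. The assumption that all stencils are fully supported in $B$ guarantees that every entry is well defined.

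\textbf{Positive definiteness of $A^TA$.} Symmetry is immediate. For any $v$ we have $v^TA^TAv=\|Av\|_2^2\ge0$. Equality forces $\Delta_j^{2k}w_v(\nu)=0$ for all $\nu\in B_k$ and all $j$, with $w_v$ supported on $Y$. The hypothesis then gives $w_v=0$, hence $v=0$. So $A$ has trivial kernel, i.e.\ full column rank, and $A^TA$ is positive definite.

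\textbf{Existence and uniqueness.} Expand the functional as
\[
J(v)=v^TA^TAv-2v^TA^Tb+b^Tb.
\]
Since the Hessian $2A^TA$ is positive definite, $J$ is strictly convex and coercive, so a minimizer exists and is unique. The stationarity condition $\nabla J=2(A^TAv-A^Tb)=0$ is necessary and, by convexity, sufficient. Hence the minimizer is characterized by the normal equations $A^TAv=A^Tb$, whose unique solution is $v=(A^TA)^{-1}A^Tb$. This gives (i) and (ii).

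The only delicate point is the bookkeeping in the first step: the constraint $u|_X=f$ must be absorbed correctly into $b$, so that the kernel of $A$ coincides exactly with the set of grid functions supported on $Y$ that are annihilated by all the differences. Once that identification is made, the hypothesis is used directly and the rest is standard linear algebra.
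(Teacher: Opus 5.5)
Your proposal is correct and follows the same route as the paper. You absorb the constraint $u|_X=f$ into the right-hand side to get $J(v)=\|Av-b\|_2^2$, identify $\ker A$ with the grid functions supported on $Y$ that are annihilated by every $\Delta_j^{2k}$ on $B_k$, and conclude full column rank, positive definiteness of $A^TA$, and a unique minimizer characterized by the normal equations. The only difference is that you write out the affine splitting $u=\tilde f+w_v$ and the convexity argument explicitly, whereas the paper states each of these steps in a single line.
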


\begin{proof}
After inserting the known values on $X$,
the functional $J$ becomes a quadratic polynomial
in the unknown vector $v\in\mathbb{R}^{|Y|}$,
of the form
\[
J(v)=\|Av-b\|_2^2.
\]
Hence a minimizer always exists.
It is unique if and only if the homogeneous problem
$Av=0$ has only the trivial solution,
which is exactly the stated assumption.
Under this condition, $A$ has full column rank,
so $A^TA$ is symmetric positive definite,
and the minimizer satisfies the normal equations.
\end{proof}

The proposed method can be interpreted as enforcing a high degree of
discrete smoothness in the reconstruction.
For $k=1$, the functional reduces to a discrete Laplacian-based
smoothing, while for larger values of $k$ it penalizes higher-order
variations.
In this sense, the method is closely related to polyharmonic and
thin-plate spline interpolation, and provides a flexible mechanism for
controlling the smoothness of the reconstructed surface.

\paragraph{\bf Comparison with the Fourier method.}\hfill

\medskip

The Fourier-based completion method is global in nature.  
It determines the missing values by enforcing a prescribed decay  
of the discrete Fourier coefficients of a periodic extension,  
thereby controlling smoothness through a spectral criterion.  
In view of the inverse estimate in Section~2,  
such decay conditions imply uniform bounds  
on high-order mixed divided differences,  
providing a global mechanism for smoothness control.

In contrast, the present method is purely local.  
It reconstructs the data inside the hole  
by minimizing high-order central differences  
in a neighborhood of the missing region.  
As shown in Proposition~4.1,  
the contribution of the unknown values  
to the high-order differences remains controlled,  
with bounds that depend primarily  
on the geometry of the hole.  
Moreover, Theorem~4.2 guarantees  
existence, uniqueness, and favorable conditioning  
of the resulting least-squares system.

A key distinction between the two approaches  
lies in their numerical behavior.  
The Fourier method leads to dense systems  
whose conditioning depends critically  
on the choice of spectral weights.  
With hyperbolic-type weighting,  
the resulting systems are only moderately ill-conditioned  
and can accommodate data with relatively high noise levels.  
In contrast, the variational formulation  
produces sparse and well-structured systems,  
with conditioning governed primarily  
by the geometry of the hole,  
and exhibits stable behavior  
for low noise levels.

\subsection{ Numerical experiment - 2D}\hfill

Let
\[
B=[0,2\pi]^2,
\qquad
G=\{(x_m,y_n)\}_{m,n=0,\ldots,M-1},
\]
be a uniform grid with
\[
x_m=\frac{2\pi m}{M},
\qquad
y_n=\frac{2\pi n}{M},
\]
and $M=40$.
We assume that the data are given on a subset $X\subset G$, and are
missing on a subset $H\subset G$, referred to as the hole.
In the present experiment, the set $X$ corresponds to an annular region,
while $H$ is a disk of radius $0.5$ centered at $(\pi,\pi)$.
The known values on $X$  
are sampled from the function
\[
f(x,y)=\frac{1}{2.5+\sin(x+1.2)+\cos(y)},
\]
with added independent uniform noise  
of amplitude $0.01$.

In order to recover the missing values inside the hole, we proceed as
follows.
We first construct a minimal axis-aligned rectangle
\[
R_k=[a,b]\times[c,d]
\]
containing the hole, such that all grid points in $H$ lie at a distance
strictly greater than $k$ grid steps from the boundary of $R_k$.
We then restrict the reconstruction problem to the grid points lying in
$R_k$.
On this restricted domain, the values are known at points in $X\cap R$
and unknown at points in $H$.

Let $f_{i,j}$ denote the values of the function on the grid points in $R$.
We define a cost functional
\[
\mathrm{J}=\sum_{(i,j)\in R_k}
\left[
\bigl(\Delta_x^{2k} f_{i,j}\bigr)^2
+
\bigl(\Delta_y^{2k} f_{i,j}\bigr)^2
\right],
\]
where $\Delta_x^{2k}$ and $\Delta_y^{2k}$ denote the central difference
operators of order $2k$ in the $x$ and $y$ directions, respectively, and
$R_k$ is the subset of indices $(i,j)$ such that the corresponding grid
points are at a distance at least $k$ from the boundary of $R$.

The reconstruction of the missing data is obtained by minimizing the
functional $\mathrm{J}$ with respect to the unknown values at the grid
points in $H$.
This leads to a linear least-squares problem of the form
\[
A u = b,
\]
where $u$ is the vector of unknown values inside the hole.
Each row of the matrix $A$ corresponds to one evaluation of a
high-order difference operator at a grid point in $R_k$, and the
right-hand side $b$ collects the contributions from neighboring points
with known data values.

Figure \ref{Diffrec}(A) shows a pointwise visualization in which
the known values are shown in blue
and the reconstructed values are shown in red.
This representation highlights the consistency
of the recovered values with the surrounding data
and demonstrates that the reconstruction forms
a coherent continuation of the surface.
No spurious oscillations are visible inside the hole,
despite the presence of noise.

Figure \ref{Diffrec}(B) presents an image plot of the reconstructed function
on the local rectangle, providing a top view
that clearly illustrates the smooth transition
across the former hole.
Together, these figures demonstrate  
that the proposed local least-squares approach,  
based on high-order central differences,  
yields accurate and stable hole filling  
for this disk-with-hole configuration.  
The condition number of $A^TA$  
is approximately $1.25\times 10^{4}$,  
indicating favorable conditioning.  
However, the method exhibits sensitivity  
to higher levels of noise.

\begin{figure}[ht]
\centering

\begin{subfigure}{0.48\textwidth}
    \centering
\includegraphics[width=1.\linewidth]{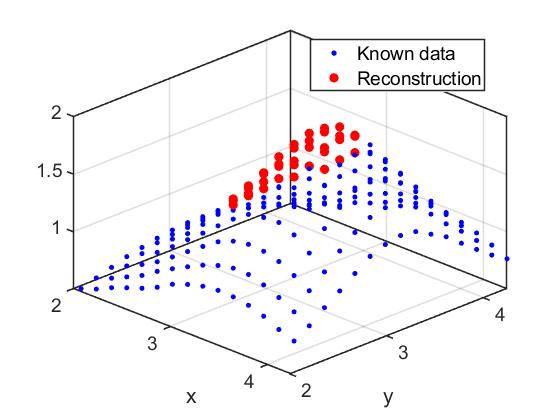}
    \caption{Known and reconstructed data}

\end{subfigure}
%\hfill
\begin{subfigure}{0.48\textwidth}
    \centering
    \includegraphics[width=1.\linewidth]{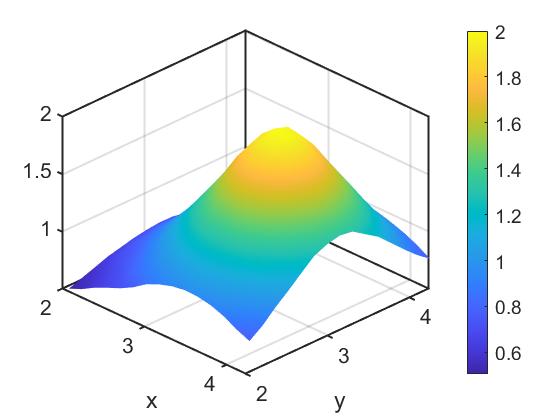}
    \caption{Resulting surface}
\end{subfigure}

\caption{}
\label{Diffrec}
\end{figure}

\subsection{Error estimates}\hfill
\medskip

To estimate the reconstruction error,
it is sufficient to consider
one-dimensional sections of the problem.
Indeed, the functional \eqref{COST}
is separable with respect to the coordinate directions,
and each high-order difference operator
acts along a single coordinate line.
Therefore, the multidimensional error
can be controlled by analyzing
the corresponding one-dimensional problem.

Let $u$ denote the underlying function,
and assume that the data on $X$
are contaminated by independent random errors
of amplitude $\varepsilon$.
Thus, the prescribed values are of the form
\[
f^\varepsilon = u + \eta,
\qquad \|\eta\|_\infty \le \varepsilon.
\]
Let $u_h$ be the reconstruction obtained
by minimizing \eqref{COST}
subject to the interpolation constraints
\[
u_h|_X = f^\varepsilon|_X.
\]

We define the error
\[
e_h := u_h - u,
\]
and introduce the residual
\[
r := A_h e_h,
\]
where $A_h$ is the linear operator
associated with the discrete $2k$-th order differences
on the local reconstruction patch.

The residual $r$ consists of two contributions:
a consistency term associated with the smoothness of $u$,
and a perturbation induced by the noise.
By Proposition~4.1,
the consistency term satisfies
\[
\|r\|_\infty = O(h^{2k})
\quad \text{in case (I),}
\qquad
\|r\|_\infty = O(h^{2k-d/2})
\quad \text{in case (II),}
\]
for exact data.
The action of the high-order difference operator
on the noise contributes an additional term
of order $\varepsilon$.
Thus,
\[
\|r\|_\infty
\le C\bigl(h^{2k} + \varepsilon\bigr)
\quad \text{in case (I),}
\]
and
\[
\|r\|_\infty
\le C\bigl(h^{2k-d/2} + \varepsilon\bigr)
\quad \text{in case (II).}
\]

To estimate the error $e_h$,
we use the representation
\[
e_h = A_h^{-1} r.
\]
Since the reconstruction is performed
on a local rectangular patch,
in which the unknown values are separated
from the boundary,
the operator $A_h$ corresponds
to a high-order difference operator
with homogeneous exterior conditions.
More precisely, after incorporating the prescribed values on $X$, 
the operator $A_h$ acts only on the unknown values in the interior 
of the local patch, with the surrounding known data inducing 
homogeneous exterior conditions for the difference stencils. 
In this form, $A_h$ coincides with a centered $2k$-th order 
difference operator with homogeneous exterior conditions, 
as required in Appendix~A. We identify $A_h$ with the matrix $A$ in Appendix~A, up to scaling 
by the mesh size.
Therefore, the bound in Appendix~A applies
on the local patch.

Let $\ell_h$ denote the number of grid points
across the diameter of the local reconstruction domain in one coordinate direction.
Then
\[
\|A_h^{-1}\|_\infty \le C\, \ell_h^{\,2k}.
\]
Hence,
\[
\|u_h - u\|_\infty
\le C\, \ell_h^{\,2k}\, \|r\|_\infty.
\]

We now distinguish between two regimes.

\medskip
\noindent
\textbf{(I) Small hole.}
Assume that the hole $H$
is contained in a ball
of diameter $qh$,
where $q$ is independent of $h$.
Then the local reconstruction patch
contains only $O(1)$ grid points,
and therefore
\[
\ell_h = O(1).
\]
Consequently,
\[
\|u_h - u\|_\infty
\le C \bigl(h^{2k} + \varepsilon\bigr).
\]
Thus, in the small-hole regime,
the reconstruction achieves
high-order accuracy,
with an additive perturbation
due to the noise.

\medskip
\noindent
\textbf{(II) Large hole.}
Assume that the hole $H$
is contained in a ball
of diameter $D$,
independent of $h$.
Then the local reconstruction patch
has diameter of order one,
and therefore
\[
\ell_h = O(h^{-1}).
\]
Hence,
\[
\|A_h^{-1}\|_\infty \le C h^{-2k},
\]
and
\[
\|u_h - u\|_\infty
\le C h^{-2k}
\bigl(h^{2k-d/2} + \varepsilon\bigr)
= C \bigl(h^{-d/2} + h^{-2k}\varepsilon\bigr).
\]

This estimate shows that,
for holes of fixed physical size,
the reconstruction error may be amplified
as the mesh is refined,
and the effect of noise becomes dominant.
In contrast,
for holes of diameter $O(h)$,
the amplification factor remains bounded,
and the method retains its stability.
\
%%%%%%%%%%%%%%%
\section{Imputation on manifolds}

We now turn to the main objective of this paper,
namely the approximation of manifolds in $\mathbb{R}^n$,
and, in particular, the completion of a manifold
in the presence of missing data.
The approach proposed here combines
the hole-filling algorithms for functional data
developed in the preceding sections
with a projection-based method
for manifold approximation,
which is described below.

\subsection{Manifold moving least-squares projection (MMLS)}\label{MMLS}
\hfill

The moving least-squares projection method introduced in
\cite{SoberLevin2016}
provides a framework for approximating a smooth
$d$–dimensional manifold from scattered data points
in $\mathbb{R}^n$.
The method extends the projection-based approach
for manifold approximation,
generalizing the construction presented in
\cite{Meshindependent}
for surfaces in $\mathbb{R}^3$
to manifolds of arbitrary dimension.
In both works, it is assumed that the data points
are quasi-uniformly distributed on the manifold
and that no significant gaps are present.
The treatment of missing data, and in particular
the completion of manifolds in the presence of holes,
is precisely the issue addressed in the present paper.

Given scattered data points on a smooth manifold $\mathcal{M}$,
and a point $x$ in its neighborhood,
the method constructs a local coordinate system
by fitting a $d$–dimensional affine subspace $H_x$
to the nearby data
via a weighted least-squares procedure.
The weights are typically chosen as a rapidly decaying function
of the distance from $x$, so that nearby data points
have a dominant influence.

Once the local affine space $H_x$ is determined,
the data are locally approximated by a smooth function
defined over $H_x$ via a second least-squares fit.
The projection of $x$ onto the approximated manifold
is then defined as the evaluation of this local model at the origin
of the coordinate system associated with $H_x$.
This yields a projection mapping $Q(x)$ that sends nearby points
onto the reconstructed manifold.

Under suitable sampling and smoothness assumptions,
the MMLS projection is well defined, smooth,
and provides a high-order approximation of the underlying manifold.
In particular, the method achieves approximation orders
comparable to classical moving least-squares schemes,
while being robust to noise and irregular sampling. The principal ingredients 
of the MMLS projection procedure 
are the degree 
of the local approximating polynomial 
and the choice of weight functions 
for computing 
the local affine space $H_x$ 
and the local polynomial.

\subsection{The manifold hole-filling algorithm}\hfill

Given scattered data points on a smooth manifold $\mathcal{M}$,
we address the problem of recovering
regions of the manifold in which the data are missing.

Similar to the notion of mesh size in the above case of functional data, here we talk about a filling distance. 
The filling distance of a point set $P\subset\mathcal{M}$
on a manifold $\mathcal{M}$ is defined by
\[
h_{P,\mathcal{M}}
=
\sup_{x\in\mathcal{M}}
\min_{p_i\in P} d_{\mathcal{M}}(x,p_i),
\]
where $d_{\mathcal{M}}$ denotes the geodesic distance on $\mathcal{M}$. For our case of missing data, we also consider the filling distance with respect to a subset $A$ of the manifold around the missing region in the data.
It measures how well data surrounds the missing region.

\begin{definition}
Let $\mathcal{M}\subset\mathbb{R}^n$ be a smooth manifold,
let $P\subset\mathcal{M}$ be a finite set of points,
and let $A\subset\mathcal{M}$.

The restricted filling distance of $P$ with respect to $A$
is defined by
\[
h_{P,A}
=
\sup_{x\in A}
\min_{p_i\in P}
d_{\mathcal{M}}(x,p_i).
\]
\end{definition}

Let $B=\mathcal{M}\setminus A$ be the missing region of $\mathcal{M}$, and assume that
\begin{equation}\label{OhPA}
diam(B)= O(h_{P,A}).
\end{equation}

Unlike the functional case,
in which hole-filling data is performed on a grid,
the manifold setting does not admit
an underlying grid structure for data completion. Accordingly, the task is not merely data completion,
but the completion of the manifold itself. We proceed to describe the proposed manifold reconstruction algorithm,
which we refer to as the hole-filling algorithm. We illustrate the algorithmic steps 
through the imputation 
of a two-dimensional manifold 
in $\mathbb{R}^3$, 
i.e., a surface, 
as shown in Figure~\ref{Torushole}.

\medskip
{\bf The hole-filling algorithm}

\medskip
\begin{enumerate}
\item Identify the location of a hole $B$
in the data set $P$. 
This may be achieved 
by detecting points $q_j\in P$, 
$j=1,\ldots,J$, 
that lack neighbors 
in certain directions. 
The algorithm in \cite{Mindthegap} 
can be used for this purpose.
In addition, estimate 
the size of the hole, 
i.e., its diameter ${diam}(B)$ and its center $\tilde q=\sum q_j/J$.  In Figure \ref{Torushole}, we see data points on the surface of a torus and a hole in the data.
\item Construct a hyperplane $\Pi$ 
that approximates the tangent space 
to $\mathcal{M}$ 
in a neighborhood of the hole. 
This can be achieved 
by estimating local tangent hyperplanes 
$\{\Pi_j\}$ at sample points $\{q_j\}$, 
and averaging these hyperplanes. 
A portion of the resulting plane 
is shown in green in Figure~\ref{Torushole}. 
Its central part is removed 
to reveal the hole in the data.
\item Let $\tilde{\pi}$ be the orthogonal projection 
of $\tilde{q}$ onto $\Pi$. 
Fix an orthonormal basis of $\Pi$, 
and introduce Cartesian coordinates 
centered at $\tilde{\pi}$. 
Define 
a uniform square mesh 
on $\Pi$ with mesh size $\sim h_{P,A}$. 
The mesh is shown in red in Figure~\ref{Torushole}.

\item Let $R\subset\Pi$ be a cube 
of edge length $2{diam}(B)$, 
centered at $\tilde{\pi}$, 
and denote by $\{x_i\}$ 
the grid points of the mesh in $R$.

\item For each grid point $x_i$, 
we apply the MMLS projection algorithm 
described in Section~\ref{MMLS}. 
The method assumes 
that sufficiently many data points 
are available 
in a neighborhood of the projection 
of $x_i$ onto the manifold.

As seen in Figure~\ref{Torushole}, 
this assumption fails 
for grid points near the hole, 
where the available data are insufficient 
for a reliable projection. 
In regions of $R$ 
at a distance $> h_{P,A}$ from the hole, 
the MMLS projection yields 
a smooth, high-order approximation 
of the manifold. 
The resulting approximation is a $d$-dimensional vector of smooth functions,
$$\overline{\mathbf{P}}(x)=\big (P_1(x), P_2(x),...,P_d(x)\big).$$
In the torus example, 
for each admissible grid point $x_i$ 
on the reference plane, 
the projection yields a point 
$\big(P_1(x_i),P_2(x_i),P_3(x_i)\big)$ 
at a small distance 
from the torus.
The blue points in Figure~\ref{RecTorus} 
result from projecting 
points sampled on a fine mesh 
of the reference plane.
\item For each component function $P_j$, 
its values are available 
on a regular grid, 
excluding grid points 
corresponding to the missing region 
on the manifold. 
Accordingly, for each $P_j$, 
the setting is suitable 
for applying the functional 
hole-filling algorithms 
presented in Sections~2--4.
The red points in Figure~\ref{RecTorus} 
are generated by applying 
the hole-filling algorithm 
of Section~\ref{HoleFA} 
independently to each component function, 
using the blue points as the given data.
\item The mesh structure 
of the reconstructed data 
in the vicinity 
of the missing region 
facilitates a direct approximation 
of the manifold 
by a spline patch.
\end{enumerate}

\medskip
{\bf Summary of the 3D Numerical Example Illustrated in the Figures:}\hfill

\medskip
We are given $\sim 2800$ scattered data points sampled from the surface of a variable-radius torus, with a localized region of missing data. In the MMLS projection step, 
the local coordinate system 
and the local polynomial approximation 
are defined using a Gaussian weight function, 
with bivariate polynomials 
of total degree $5$. This results in the blue points in Figure \ref{RecTorus}. The error analysis in \cite{SoberLevin2016} indicates that the approximation error is of order $O(h_{P.A}^6)$. The observed maximal error 
between the blue reconstructed points 
and the torus surface, 
is approximately 
$2.5\times 10^{-5}$. In the hole-filling algorithm, the procedure described in Section~\ref{HoleFA} 
is applied with $k=3$. The observed error 
between the red reconstructed points 
and the torus surface 
is approximately 
$6\times 10^{-5}$. 
This error reflects 
both the error in the input data 
(the blue points) 
and the error introduced 
by the hole-filling procedure.

\begin{figure}
\centering
\includegraphics[width=5.3 in]{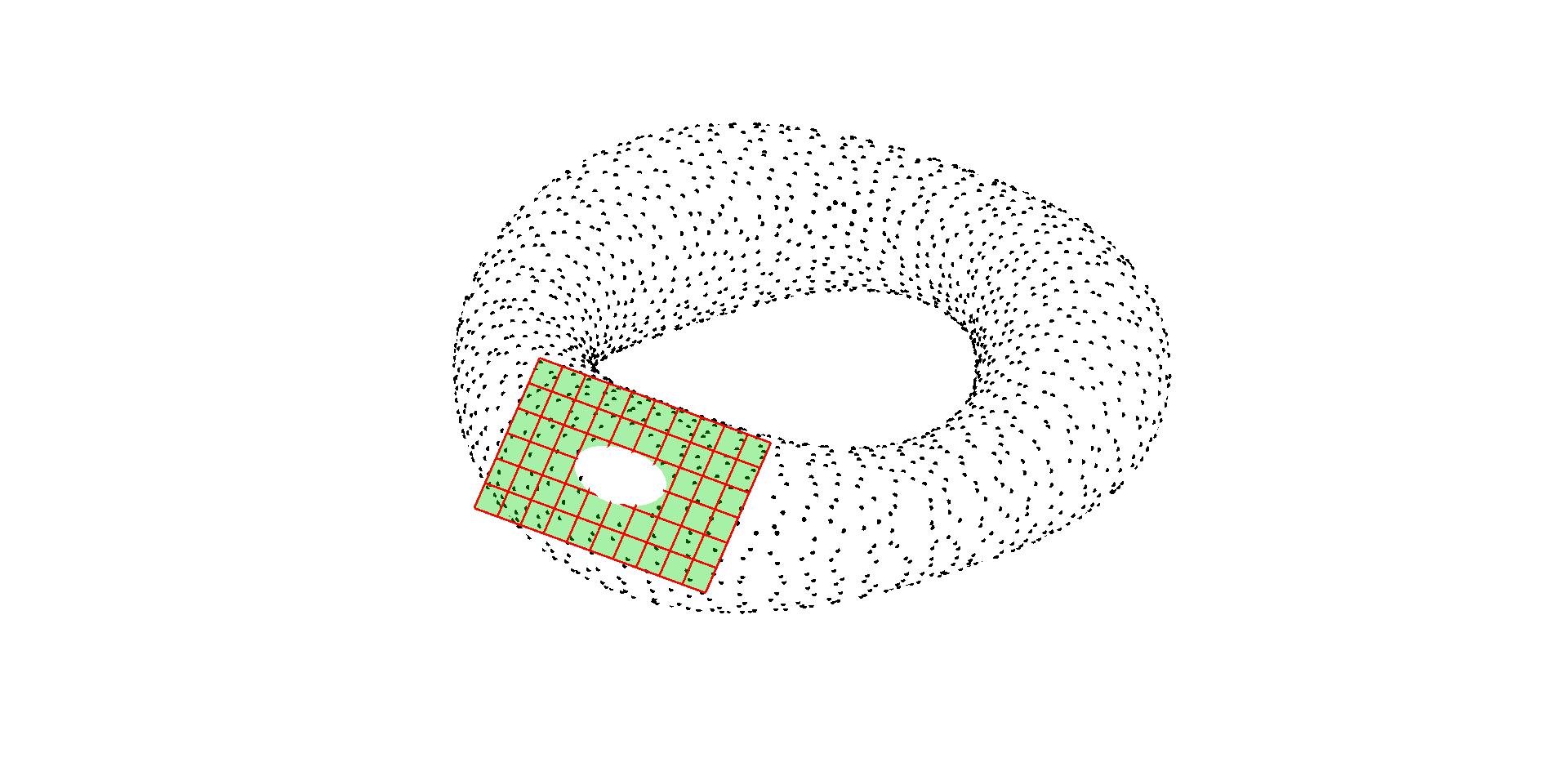}
\caption{Preparing the tangent plane near the hole in the data}
\label{Torushole}
\end{figure}
\medskip

\begin{figure}
\centering
\includegraphics[width=5 in]{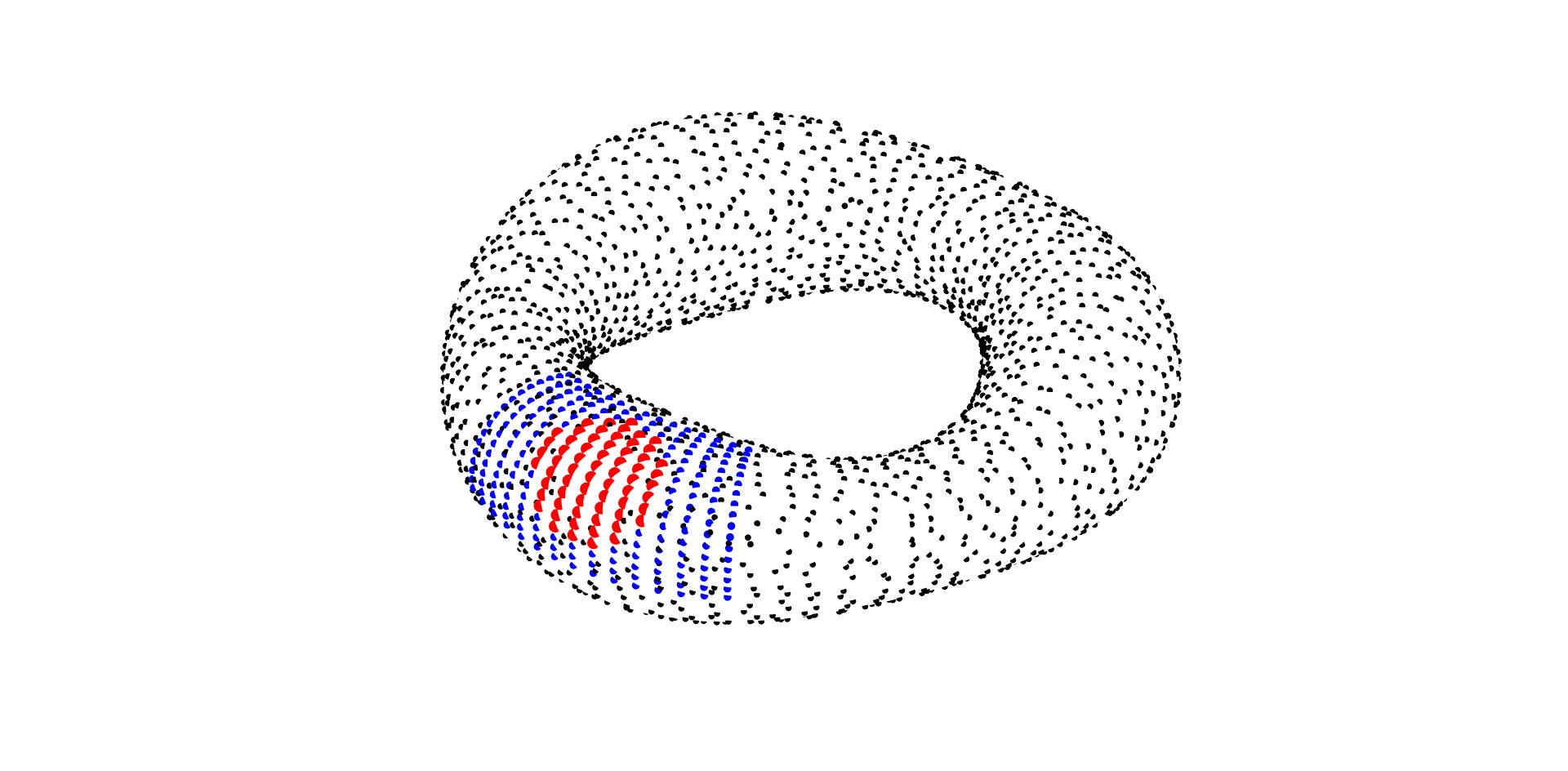}
\caption{Projecting and filling the hole on the boundary of the torus}
\label{RecTorus}
\end{figure}
\medskip

{\bf Illustration of the algorithm for a three-dimensional manifold in $\mathbb{R}^4$.}\hfill

\medskip
We consider the three-dimensional manifold in $\mathbb{R}^4$ defined by
\[
x_1^2 + x_2^2 + x_3^2 - x_4^2 = 0.
\]

We assume that the data are given  
at grid points on cross-sections  
of the manifold  
at fixed values of $x_4$. Portions of four such cross-sections  
are displayed on the left side of Figure~\ref{Ballsin4D}.  
The data points coincide  
with the vertices of the meshes.  
A region of missing data is visible at the center of each mesh. A portion $R$ of the associated hyperplane $\Pi$,  
as described in the hole-filling algorithm,  
is shown on the right side of the figure.  
Its natural position is near the missing region.  
However, for clarity, its center is shifted 
to the red point.

The next steps of the algorithm  
are to overlay a mesh on $R$,  
apply the MMLS projection,  
and then perform functional hole filling  
for each of the four components  
of the projected data.

The data structure in this example  
is selected to facilitate  
a clear presentation  
of the data,  
the missing region,  
and the reference hyperplane  
for a three-dimensional manifold.  
Under these circumstances,  
the hole-filling problem  
is significantly simplified.  
It can be treated directly  
on the given grid structure,  
separately on each cross-section,  
using a hole-filling algorithm  
for functions defined on two-dimensional grids.

\begin{figure}
\centering
\includegraphics[width=4 in]{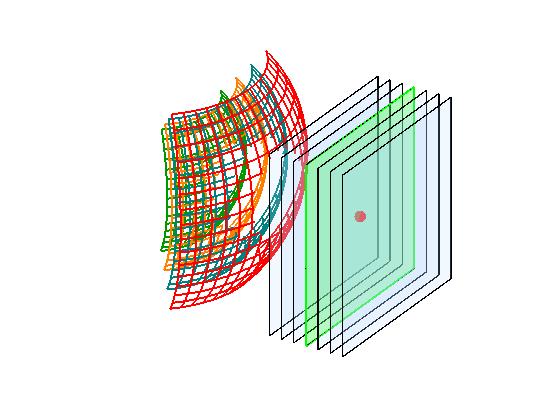}
\caption{Cross-sectional data containing a missing region and the associated reference hyperplane}
\label{Ballsin4D}
\end{figure}
\medskip

\medskip

\appendix

\section{A bound on the inverse of high-order difference operators}

\begin{proposition}
Let $A\in\mathbb R^{n\times n}$ be the matrix
of the centered $2k$-th difference operator
with homogeneous exterior conditions.
Then
\[
\|A^{-1}\|_\infty
\le
\frac{|E_{2k}|}{2^{2k}(2k)!}\,(n+1)^{2k},
\]
and in particular
\[
\|A^{-1}\|_\infty
\le
\Bigl(\frac{(n+1)^2}{8}\Bigr)^k,
\qquad k\ge2.
\]
\end{proposition}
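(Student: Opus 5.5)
The plan is to identify $A$ with $T^k$, where $T=\mathrm{tridiag}(-1,2,-1)\in\mathbb R^{n\times n}$ is the second difference with zero Dirichlet values. This reading of the hypothesis is itself a step that needs justifying. I take ``homogeneous exterior conditions'' to mean that the centered $2k$-th difference $(-1)^k\Delta^{2k}=(-\Delta^2)^k$ is applied to the grid function extended by zero (odd reflection) beyond the $n$ unknowns, which is exactly $T^k$. I flag that the truncated banded Toeplitz matrix with symbol $(2-2\cos\theta)^k$ differs from $T^k$ near the boundary for $k\ge2$. For that matrix the positivity step below is not automatic, so if this is the intended meaning, this step has to be argued separately.

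With $A=T^k$, the first step is positivity. $T^{-1}$ is the discrete Green matrix, $(T^{-1})_{ij}=\min(i,j)\,(n+1-\max(i,j))/(n+1)>0$, so $A^{-1}=(T^{-1})^k$ is entrywise positive. Hence $\|A^{-1}\|_\infty=\|A^{-1}\mathbf 1\|_\infty=\|w_k\|_\infty$, where $w_0=\mathbf 1$ and $Tw_m=w_{m-1}$.

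The second step compares $w_k$ with continuous functions by a supersolution argument. Put $L=n+1$ and define $W_0\equiv1$ and $W_m$ on $[0,L]$ by
\[
-W_m''=W_{m-1},\qquad W_m(0)=W_m(L)=0,
\]
so that every $W_m\ge0$ and, since $-W_m''\ge0$, every $W_m$ is concave. For smooth $W$ one has the exact identity
\[
-\bigl(W(x+1)-2W(x)+W(x-1)\bigr)=\int_{-1}^{1}(1-|t|)\,(-W'')(x+t)\,dt .
\]
Applied to $W_m$, this gives $(TW_m)(i)=\int_{-1}^1(1-|t|)W_{m-1}(i+t)\,dt\ge W_{m-1}(i)$, by Jensen's inequality and the concavity of $W_{m-1}$ (the kernel is a probability density with mean zero). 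Here the boundary values $W_m(0)=W_m(L)=0$ match the homogeneous exterior conditions. Now induct: if $W_{m-1}\ge w_{m-1}$ on the grid, then $T(W_m-w_m)\ge W_{m-1}-w_{m-1}\ge0$, and positivity of $T^{-1}$ gives $W_m\ge w_m$. Therefore $\|A^{-1}\|_\infty\le\max W_k$.

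The third step is to evaluate $\max W_k=W_k(L/2)$. By scaling, $W_k(x)=L^{2k}U_k(x/L)$, where $U_k$ solves $(-1)^kU^{(2k)}=1$ on $[0,1]$ with Navier conditions $U^{(2j)}(0)=U^{(2j)}(1)=0$ for $j<k$. This $U_k$ is expressed through the Euler polynomial, $U_k(x)=(-1)^kE_{2k}(x)/(2\,(2k)!)$, and using $E_{2k}(1/2)=2^{-2k}E_{2k}$ (Euler numbers) this gives $U_k(1/2)=|E_{2k}|/(2^{2k}(2k)!)$. I will verify this via the Fourier sine series of $U_k$ or via Taylor coefficients of $\operatorname{sech}$; the case $k=1$ gives $1/8$, as it should. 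Combining with the second step yields the first bound.

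For the second bound I use $|E_{2k}|/(2k)!=2(2/\pi)^{2k+1}\beta(2k+1)$ with $\beta\le1$. This gives $|E_{2k}|/(2^{2k}(2k)!)\le\frac{4}{\pi}\pi^{-2k}$, and this is at most $8^{-k}$ iff $(8/\pi^2)^k\le\pi/4$, which holds for $k\ge2$. I expect the main obstacle to be the first step, namely pinning down the meaning of ``homogeneous exterior conditions'' so that $A=T^k$ and monotonicity is available. The Jensen supersolution step and the Euler-number identity should be routine.
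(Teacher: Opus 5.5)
Your outline is essentially the paper's proof: reduce to $T^k$, use entrywise positivity, compare with the Navier problem, and evaluate with Euler numbers. It fails at the same step, namely the comparison that the paper simply asserts as $u_i\le (n+1)^{2k}U(x_i)$.

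\textbf{The Jensen step is reversed.} Averaging a concave function against the mean-zero probability density $1-|t|$ on $[-1,1]$ \emph{lowers} its value. So $\int_{-1}^{1}(1-|t|)\,W_{m-1}(i+t)\,dt\le W_{m-1}(i)$, i.e.\ $TW_m\le W_{m-1}$ on the grid. Your induction then yields $W_k\le w_k$: the continuous solution is a lower bound for $T^{-k}\mathbf 1$, not an upper bound.

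\textbf{The first inequality is false for $A=T^k$, so this cannot be patched.} Note that $A=T^k$ is also the paper's own reading. For $k=2$, your identity gives exactly $TW_2=w_1-\tfrac{1}{12}\mathbf 1$ on the grid, since $W_1=w_1=i(n+1-i)/2$ and $W_1''=-1$. Hence $w_2=W_2+\tfrac{1}{12}w_1$. For odd $n$ the midpoint value is $\tfrac{5}{384}(n+1)^4+\tfrac{1}{96}(n+1)^2$, which exceeds $\tfrac{|E_4|}{2^4\,4!}(n+1)^4$. Concretely:
\begin{itemize}
\item $n=1$ gives $A=[4]$ and $\|A^{-1}\|_\infty=\tfrac14>\tfrac{5}{24}$;
\item $n=3$ gives $T^{-2}\mathbf 1=(\tfrac52,\tfrac72,\tfrac52)$, and $\tfrac72>\tfrac{10}{3}$.
\end{itemize}
More generally, strict Jensen (since $W_{k-1}$ is strictly concave for $k\ge2$) together with $T^{-1}>0$ gives $w_k>W_k$ at every grid point for all $k\ge2$. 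So the first bound fails for every $k\ge2$ and every odd $n$. The Euler-number constant is only the correct leading coefficient as $n\to\infty$.

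\textbf{The second inequality is true, but prove it directly rather than as a corollary.} You already have $T^{-1}\ge0$ and $T^{-1}\mathbf 1=\bigl(i(n+1-i)/2\bigr)_i$, so $\|T^{-1}\|_\infty\le (n+1)^2/8$. Submultiplicativity then gives $\|A^{-1}\|_\infty=\|T^{-k}\|_\infty\le\|T^{-1}\|_\infty^k\le\bigl((n+1)^2/8\bigr)^k$ for every $k\ge1$, with equality when $n=1$.

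\textbf{A smaller slip in your third step.} The Navier solution is $U_k=(-1)^kE_{2k}(x)/(2k)!$, without the factor $\tfrac12$. With that factor you would get $U_1(\tfrac12)=\tfrac1{16}$ instead of $\tfrac18$.
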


\begin{proof}
Write $A=(-1)^kB^k$, where $B$ is the Dirichlet second-difference matrix.
Hence $\|A^{-1}\|_\infty=\|B^{-k}\|_\infty$.
Since $B$ is an $M$-matrix, $B^{-k}\ge0$ entrywise, and therefore
\[
\|B^{-k}\|_\infty=\|B^{-k}\mathbf1\|_\infty.
\]
Let $u=B^{-k}\mathbf1$.
Then $B^ku=\mathbf1$.
Comparing this discrete problem with the continuous boundary-value problem
\[
(-1)^kU^{(2k)}=1\quad\text{on }(0,1),
\qquad
U^{(2r)}(0)=U^{(2r)}(1)=0,\ r=0,\dots,k-1,
\]
yields
\[
u_i\le (n+1)^{2k}U(x_i),\qquad x_i=\frac{i}{n+1}.
\]
Thus
\[
\|A^{-1}\|_\infty
\le
(n+1)^{2k}\max_{x\in[0,1]}U(x).
\]
Using the sine-series representation of $U$, one finds
\[
\max_{x\in[0,1]}U(x)
=
\frac{4}{\pi^{2k+1}}\beta(2k+1)
=
\frac{|E_{2k}|}{2^{2k}(2k)!}.
\]
Hence
\[
\|A^{-1}\|_\infty
\le
\frac{|E_{2k}|}{2^{2k}(2k)!}(n+1)^{2k},
\]
and therefore
\[
\|A^{-1}\|_\infty
\le
\Bigl(\frac{(n+1)^2}{8}\Bigr)^k.
\]
\end{proof}

\end{document}